% !TEX TS-program = pdflatex
\documentclass[a4paper, 11pt]{article}
\usepackage[margin=1in]{geometry}
\usepackage[T1]{fontenc}
\usepackage[utf8]{inputenc}
\usepackage{lmodern}
\usepackage{amsmath,amsthm,amssymb,mathtools}
\usepackage{microtype}
\usepackage[hidelinks]{hyperref}
\usepackage[nameinlink,noabbrev]{cleveref}
\title{On a Question of Gowers on Clique Differences}
\author{Ryan Alweiss\thanks{Department of Pure Mathematics and Mathematical Statistics and Trinity College, University of Cambridge. Email: ra699@cam.ac.uk. Research supported by an NSF Mathematical Sciences Postdoctoral Fellowship.}}
\date{September 3, 2025}
\begin{document}
\maketitle

\begin{abstract}
We solve a question of Gowers from 2009 on clique differences in chains, thus ruling out any Sperner-type proof of the polynomial density Hales–Jewett Theorem for alphabets of size 2.
\end{abstract}

\theoremstyle{definition}
\newtheorem{theorem}{Theorem}[section]
\newtheorem{lemma}[theorem]{Lemma}
\newtheorem{proposition}[theorem]{Proposition}
\theoremstyle{definition}
\newtheorem{definition}[theorem]{Definition}
\newtheorem{conjecture}[theorem]{Conjecture}
\newtheorem{question}[theorem]{Question}
\theoremstyle{remark}
\newtheorem{remark}[theorem]{Remark}

\section{Introduction}

The polynomial density Hales–Jewett question is one of the central remaining open questions in ergodic Ramsey theory, and is a common generalization of the Bergelson–Leibman theorem~\cite{BL} and the density Hales–Jewett theorem~\cite{DHJ}.   One special case was proposed by Gowers as a potential polymath project~\cite{GowersBlog}.

\begin{conjecture}[Gowers 2009, \cite{GowersBlog}] For every $\delta>0$, there exists $n$ such that if $A$ is any collection of at least $\delta 2^{\binom{n}{2} }$ graphs with vertex set $\{1, \cdots, n\}$, then $A$ contains two distinct graphs $G$ and $H$ so that $G \subset H$ and $H \setminus G$ is a clique.
\end{conjecture}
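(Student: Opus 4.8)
The plan is to read this as a ``polynomial'' density Hales--Jewett statement. Identify each graph $G$ on $[n]$ with its edge-indicator vector in $\{0,1\}^{\binom{[n]}{2}}$, and observe that a pair $G \subsetneq H$ with $H \setminus G = \binom{S}{2}$ is precisely a combinatorial line in which the coordinates indexed by the edges of a clique on the vertex set $S$ are flipped from $0$ to $1$ in unison; \Cref{conj:pdhj} asks that every $\delta$-dense subset of this cube contain such a line. It is worth separating out the degenerate case $|S| = 2$, where the conclusion is immediate: a symmetric chain decomposition partitions $\{0,1\}^{\binom n2}$ into only $\binom{\binom n2}{\lfloor \binom n2 / 2\rfloor} = o\bigl(2^{\binom n2}\bigr)$ chains, so a $\delta$-dense $A$ must meet some chain in two graphs, and consecutive graphs on a chain differ in a single edge (a $K_2$). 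The substance of the conjecture is therefore the restriction to cliques on at least three vertices, and the obstruction to copying the Sperner argument is that the admissible lines are extremely sparse: there are only $2^n$ vertex subsets, hence at most $2^n$ translates of each fixed clique, against $2^{\binom n2}$ graphs.

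With that in mind I would first push the chain/LYM route as far as it goes: try to cover the graph lattice economically --- ideally partition it into $o\bigl(2^{\binom n2}\bigr)$ classes --- by ``clique chains'' $\emptyset = G_0 \subsetneq G_1 \subsetneq \cdots$ in which every step adjoins the edge set of a clique, so that pigeonhole on a $\delta$-dense $A$ produces two graphs on a common clique chain and hence a clique difference. Failing a clean chain decomposition, the fallback is a density increment of the kind used to prove density Hales--Jewett: assuming $A$ has no clique difference, restrict to a structured subfamily --- for instance fix a random vertex set and condition on all edges inside it, or iterate along a growing flag of vertex sets --- and show that the density of $A$ on the subfamily exceeds $\delta$ by a fixed amount, then iterate to a contradiction. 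A third, softer possibility is to deduce the statement from a removal/regularity lemma for the ``triangle'' hypergraph on $\binom{[n]}{2}$, arguing that positive density forces not merely one but many clique differences.

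The step I expect to be the main obstacle --- and where I would concentrate effort --- is exactly the analysis of clique chains (equivalently, of clique-difference-free subsets of a single maximal chain). I do not expect the $|S| = 2$ argument to survive: a clique difference on $S$ is the rigid event that $H$ contains the \emph{entire} edge set $\binom{S}{2}$ while $G$ contains \emph{none} of it, and it looks quite plausible that one can engineer a maximal chain $\emptyset \subsetneq \cdots \subsetneq K_n$, adding one edge at a time in a carefully chosen order, together with a $\delta$-dense subset of its graphs among which no two differ by a clique on three or more vertices. If such a construction exists, then no Sperner-type (chain-partition) argument can prove \Cref{conj:pdhj}, and a correct proof would have to rely on genuinely analytic density-increment or regularity machinery rather than pure poset combinatorics; deciding which of these two scenarios holds is, I believe, the real content of the problem.
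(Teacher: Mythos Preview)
The statement you are addressing is not proved in the paper: it is Conjecture~\ref{conj:pdhj}, presented as open, and the paper's contribution is precisely to \emph{rule out} one avenue of attack on it. So there is no ``paper's own proof'' to compare against, and what you have written is a strategic outline rather than a proof.

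Two specific comments on that outline.

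First, the $|S|=2$ shortcut has a gap. A symmetric chain decomposition does show that a $\delta$-dense $A$ meets some chain in at least two graphs once $n$ is large, but nothing forces those two graphs to be \emph{consecutive} on the chain, and only consecutive graphs differ by a single edge. Concretely, the family of all graphs on $[n]$ with an even number of edges has density $1/2$ yet contains no pair $G\subsetneq H$ with $H\setminus G$ a single edge, so the $K_2$ case alone cannot settle the conjecture for any $\delta\le 1/2$.

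Second, and more to the point, your closing suspicion is exactly what the paper establishes. Theorem~\ref{thm:alw} proves that for \emph{every} chain $G_1\subsetneq\cdots\subsetneq G_r$ there is a subset $A\subset\{1,\dots,r\}$ of size at least $r/20$ such that $G_j\setminus G_i$ is never a clique for $i,j\in A$. This answers Question~\ref{conj:gow} negatively and confirms that no chain-partition or Sperner-type argument can prove Conjecture~\ref{conj:pdhj}. The conjecture itself remains open; your density-increment and regularity fallbacks are thus the only live options among those you list, but the paper does not pursue them.
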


In the same blog post, Gowers asks the following question in an attempt to prove Conjecture 1.1.

% Given a sequence of distinct graphs $G_1 \subset G_2 \subset \cdots \subset G_r$, does there exist $A \subset \{1,2,\dots,r\}$ of size at least $\delta r$ such that for all $i<j$ in $A$ the graph $G_j \setminus G_i$ is a clique? If the answer to the above question is positive, then it seems plausible that this could lead to a proof of the polynomial density Hales–Jewett theorem for alphabets of size 2. In this note we settle \emph{Question~1.2}; see~\cite{GowersBlog} for more details.

\begin{question}[Gowers, 2009]
Is it true that for every $\delta>0$ there exists a sequence of distinct graphs $G_1 \subset G_2 \subset \cdots \subset G_r$ such that for every subset $A \subset \{1,2,\dots,r\}$ of size at least $\delta r$ there exists $i<j$ such that $i,j \in A$ and $G_j \setminus G_i$ is a clique?
\end{question}

If the answer to the above question is positive, then it seems plausible that, by uniformly covering the set of all graphs by sequences of the above form, Conjecture~1.1 would follow from a positive answer to Question~1.2; see~\cite{GowersBlog} for more details.

Given a sequence of distinct graphs $G_1 \subset \cdots \subset G_r$, we form the graph $G$ on them by connecting $G_i, G_j$ for $i<j$ when $G_j \setminus G_i$ is a clique.  In this note we settle Question~1.2 in the negative, and we show that $G$ always has an independent set of size at least $\frac{r}{20}$.

\begin{theorem}\label{thm:main}
For every sequence of distinct graphs $G_1 \subset G_2 \subset \cdots \subset G_r$, there exists $A\subset \{1,2,\dots,r\}$ of size at least $r/20$ so that for all $i,j \in A$, $G_j \setminus G_i$ is not a clique.
\end{theorem}

\section{Proof of Theorem 1.3}

\begin{lemma}\label{lem:chain}
Given $a<b<c<d$, if $G_c\setminus G_a$ and $G_d\setminus G_b$ are cliques, then $G_c\setminus G_b$ is also a clique.
\end{lemma}

\begin{proof}
$G_c\setminus G_a$ and $G_d\setminus G_b$ are cliques, so their nonempty edge intersection $G_c\setminus G_b$ is also a clique.
\end{proof}

\begin{lemma}\label{lem:triple}
For any $y$, it cannot be true that all of $G_y, G_{y+1}, G_{y+2}$ have at least three right-neighbors and three left-neighbors in $G$.
\end{lemma}

\begin{proof}
Assume this was the case. Then there is some $z \ge y+3$ so that $G_z \setminus G_y$ is a clique. Also there is some $x < y$ so that $G_{y+1} \setminus G_x$ is a clique. By Lemma~\ref{lem:chain} on $(x, y, y+1, z)$, then $G_{y+1} \setminus G_y$ is a clique. Similarly, $G_{y+2} \setminus G_{y+1}$ is a clique. Also there is some $u < y$ so that $G_{y+2} \setminus G_u$ is a clique, and then by Lemma~\ref{lem:chain} on $(u, y, y+2, z)$, $G_{y+2} \setminus G_y$ is a clique, but it is the edge-disjoint union of the cliques $G_{y+2} \setminus G_{y+1}$ and $G_{y+1} \setminus G_y$, which is a contradiction.
\end{proof}

Call a graph $G_i$ \emph{good} if it has either at most two right-neighbors in $G$ or at most two left-neighbors in $G$. For any $G_1 \subset G_2 \subset \cdots \subset G_r$, at least $\frac{r-2}{3}$ are good by Lemma~\ref{lem:triple} because we cannot have three bad in a row. So without loss of generality at least $\frac{r-2}{6}$ of the graphs have at most two right-neighbors. This means we can go through these graphs from left to right and greedily select an independent set of size at least $\frac{r-2}{18} \ge \frac{r}{20}$. We are done.

Following the initial proof, Noga Alon communicated an improvement of the constant in Theorem 1.3.

\begin{lemma}
There are no $G_{3i-2}, G_{3i-1}, G_{3i}$ so that:
\begin{itemize}
  \item There is some $j > 3i - 1$ so that $G_j \setminus G_{3i-2}$ is a clique.
  \item There is some $k > 3i - 1$ and some $\ell < 3i - 1$ so that $G_k \setminus G_{3i-1}$ and $G_{3i-1} \setminus G_\ell$ are cliques.
  \item There is some $m < 3i - 1$ so that $G_{3i} \setminus G_m$ is a clique.
\end{itemize}

\end{lemma}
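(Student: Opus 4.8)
The plan is to replay the proof of Lemma~\ref{lemma:123} with the hypotheses pared down to exactly what is needed, being careful about the degenerate index alignments that now arise because the four witnesses $j,k,\ell,m$ are only assumed to lie strictly on one side of $3i-1$ rather than three steps away. Put $y=3i-2$, so the graphs in question are $G_y,G_{y+1},G_{y+2}$, and since the chain is strict, $G_y\subsetneq G_{y+1}\subsetneq G_{y+2}$. Unwinding the three bullets: there is $j\ge y+2$ with $G_j\setminus G_y$ a clique; there are $k\ge y+2$ and $\ell\le y$ with $G_k\setminus G_{y+1}$ and $G_{y+1}\setminus G_\ell$ cliques; and there is $m\le y$ with $G_{y+2}\setminus G_m$ a clique. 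I would assume all of this and derive a contradiction.

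Exactly as in Lemma~\ref{lemma:123}, I would first deduce that $G_{y+1}\setminus G_y$ is a clique: if $\ell=y$ this is a hypothesis, and otherwise $\ell<y<y+1<j$, so Lemma~\ref{lemma:abcd} on $(\ell,y,y+1,j)$, fed the cliques $G_{y+1}\setminus G_\ell$ and $G_j\setminus G_y$, delivers it. By the mirror-image argument $G_{y+2}\setminus G_{y+1}$ is a clique: if $k=y+2$ this is a hypothesis, and otherwise $m<y+1<y+2<k$ and Lemma~\ref{lemma:abcd} on $(m,y+1,y+2,k)$, fed $G_{y+2}\setminus G_m$ and $G_k\setminus G_{y+1}$, delivers it. Finally $G_{y+2}\setminus G_y$ is a clique: if $m=y$ or $j=y+2$ this is immediate from a hypothesis, and otherwise $m<y<y+2<j$ and Lemma~\ref{lemma:abcd} on $(m,y,y+2,j)$, fed $G_{y+2}\setminus G_m$ and $G_j\setminus G_y$, delivers it.

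Now $G_{y+2}\setminus G_y$ is a clique that is also the edge-disjoint union of the two \emph{nonempty} cliques $G_{y+1}\setminus G_y$ and $G_{y+2}\setminus G_{y+1}$ (nonempty because the chain is strict), and this is the contradiction already used in Lemma~\ref{lemma:123}: a complete graph on at least two vertices is not the edge-disjoint union of two nonempty cliques $K,K'$, since if $u$ is a vertex met by $K$ but not by $K'$ and $v$ is met by $K'$ but not by $K$ — such $u,v$ exist, as $K$ and $K'$ each span at least two vertices and share at most one (a shared edge would violate edge-disjointness) — then the edge $uv$ of the ambient graph lies in neither $K$ nor $K'$. I expect the only real obstacle to be the bookkeeping in the three deductions above: because the witnesses are now only guaranteed to lie strictly on one side of $3i-1$, each application of Lemma~\ref{lemma:abcd} has a boundary case ($\ell=y$; $k=y+2$; $m=y$ or $j=y+2$) in which the target difference is already a clique by hypothesis, and one simply has to list these; past that, the proof is a transcription of Lemma~\ref{lemma:123}.
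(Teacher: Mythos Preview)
Your proof is correct and follows essentially the same route as the paper's: both arguments handle the boundary cases $\ell=3i-2$, $k=3i$, $m=3i-2$, $j=3i$ separately and otherwise invoke Lemma~\ref{lemma:abcd} on the same four-tuples to obtain that $G_{3i-1}\setminus G_{3i-2}$, $G_{3i}\setminus G_{3i-1}$, and $G_{3i}\setminus G_{3i-2}$ are all cliques, then finish with the edge-disjoint-union contradiction from Lemma~\ref{lemma:123}. Your write-up is in fact a bit more explicit than the paper's, which simply says ``Similarly'' for the second step and cites Lemma~\ref{lemma:123} for the final contradiction rather than rederiving it.
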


\begin{proof}
Assume this were the case. Then if $\ell = 3i - 2$, $G_{3i-1} \setminus G_{3i-2}$ is a clique. Else if $\ell < 3i - 2$, then by Lemma~\ref{lem:chain} on $(\ell, 3i-2, 3i-1, j)$, $G_{3i-1} \setminus G_{3i-2}$ is a clique anyway. Similarly, $G_{3i} \setminus G_{3i-1}$ is also a clique. Finally if $j = 3i$ or $m = 3i-2$ then $G_{3i} \setminus G_{3i-2}$ is a clique. Else if $j > 3i$ and $m < 3i-2$ then by Lemma~\ref{lem:chain} on $(m, 3i-2, 3i, j)$, $G_{3i} \setminus G_{3i-2}$ is a clique anyway. This contradicts Lemma~\ref{lem:triple}.
\end{proof}

For each $i \ge 1$, either $G_{3i-2}$ violates the first bulleted condition, $G_{3i-1}$ the second, or $G_{3i}$ the third. Select one of them correspondingly. Direct all edges from left to right. Now on this induced subgraph, every vertex $G_{3i-2}$ has outdegree $0$, every vertex $G_{3i-1}$ has either indegree $0$ (if there does not exist $\ell$) or outdegree $0$ (if there does not exist $k$), and every vertex $G_{3i}$ has indegree $0$. So every vertex has indegree or outdegree $0$, and thus there is an independent set of size at least $\lceil \lfloor r/3 \rfloor / 2 \rceil \ge \frac{r-2}{6}$.

\section{Acknowledgements}

Thanks to Noga Alon, Timothy Gowers, and an anonymous reviewer for helpful comments.

\end{document}